\newcommand{\beq}{\begin{eqnarray}}
\newcommand{\eeq}{\end{eqnarray}}
\newcommand{\bq}{\begin{equation}}
\newcommand{\eq}{\end{equation}}
\newcommand{\beqn}{\begin{eqnarray*}}
\newcommand{\eeqn}{\end{eqnarray*}}
\newcommand{\dint}{\displaystyle\int}
\newcommand{\R}{\ensuremath{\mathbb{R}}}
\newcommand{\vertiii}[1]{{\vert\kern-0.25ex\vert\kern-0.25ex\vert #1
    \vert\kern-0.25ex\vert\kern-0.25ex\vert}}
\newcommand{\ignore}[1]{}
\newtheorem{definition}{Definition}[section]
\newtheorem{proposition}{Proposition}[section]
\newtheorem{theorem}{Theorem}[section]
\newtheorem{lemma}{Lemma}[section]
\title[{\tt IBVP} for nonlinear hyperbolic equations]{Existence of solutions for a class of {IBVP} for nonlinear hyperbolic equations}
\author[S. G. Georgiev, M. Majdoub]{Svetlin Georgiev Georgiev \& Mohamed Majdoub}
\address{Department of Mathematics, College of Science, Imam Abdulrahman Bin Faisal University, P. O. Box 1982, Dammam, Saudi Arabia}
\address{Basic and Applied Scientific Research Center, Imam Abdulrahman Bin Faisal University, P.O. Box 1982, 31441, Dammam, Saudi Arabia}
\email{\sl svetlingeorgiev1@gmail.com}
\email{\sl mmajdoub@iau.edu.sa}
\begin{document}

\begin{abstract} We study  a class of initial  boundary value problems  of hyperbolic type. A new topological approach is applied to prove the existence of non-negative classical solutions.  The arguments are based upon   a recent  theoretical result.
\end{abstract}


\subjclass[2010]{47H10,  58J20,  35L15}

\keywords{Hyperbolic Equations, positive solution, fixed point, cone, sum of operators.}


\date{\today}

\maketitle

\section{Introduction}
This paper concerns global existence of classical solutions of  one-dimensional  nonlinear wave equations with initial and  mixed boundary conditions.  More precisely, we investigate the following IBVP
\begin{equation}\label{1}
\left\{\begin{array}{ccll}
u_{tt}-u_{xx}&=& f(t, x, u),\quad t\geq 0,\quad x\in [0, L],\\
u(0, x)&=& u_0(x),\quad x\in [0, L],\\
u_t(0, x)&=& u_1(x),\quad x\in [0, L],\\
u(t, 0)&=& u_x(t, L)=0,\quad t\geq 0,
\end{array}
\right.
\end{equation}
where $L>0$, $f : [0,\infty)\times [0,L]\times \R\to\R$ is continuous and $u_0\in \mathcal{C}^2([0, L])$, $u_1\in \mathcal{C}^1([0, L])$ are the initial data.

Mixed boundary value problems arise in several areas of applied mathematics and physics,
such as gas dynamics, nuclear physics, chemical reaction, studies of atomic structures, and atomic
calculation. Therefore, mixed problems have attracted much interest and have been studied by many
authors. See \cite{Arg, BCN, BMMW} and references cited therein. In our case, the equation \eqref{1} describes the interaction of solitary waves in elastic rods, the dynamics of one-dimensional internal gravity waves in an incompressible stratified fluid.

In \cite{godin} the author investigate the following mixed problem
\begin{equation}
\label{2.1}
\left\{\begin{array}{ccll}
u_{tt}-u_{xx}&=&f(u),\quad x>0, \quad t>0,\\
u_x+\gamma u_t&=&0,\quad x=0,\quad t>0,\\
u=\psi_0,\quad u_t&=&\psi_1,\quad x>0,\quad t=0,
\end{array}
\right.
\end{equation}
where $|\gamma|\geq 1$, under the compatibility conditions
\begin{equation*}
\begin{array}{l}
\psi_0^{\prime}(0)+\gamma \psi_1(0)=0,\\
\psi_1^{\prime}(0)+\gamma( \psi_0^{\prime\prime}(0)+F(\psi_0(0)))=0,\\
\psi_0^{\prime\prime\prime}(0)+\gamma \psi_1^{\prime\prime}(0)=0.
\end{array}
\end{equation*}
If $f\in \mathcal{C}^1(\mathbb{R})$, $\gamma\ne 1$, $\psi_j\in \mathcal{C}^{2-j}([0, \infty))$, $j=0, 1$, it is proved that there exists an open neighborhood $U$ of $\{0\}\times [0, \infty)$ such that  \eqref{2.1} has exactly one solution $u\in \mathcal{C}^2(\overline{U})$. Moreover, if $f\in \mathcal{C}^2(\mathbb{R})$, $\psi_j\in \mathcal{C}^{3-j}([0, \infty))$ for $j=0, 1$, then there exists an open neighborhood $U$ of $\{0\}\times [0, \infty)$ such that \eqref{2.1} has exactly one solution $u\in \mathcal{C}^3(\overline{U})$. The method use In \cite{godin} is mainly based on  conservation laws.

The following mixed problem  is investigated in \cite{kharibeg}
\begin{equation}
\label{2.2}
\left\{\begin{array}{ccll}
u_{tt}-u_{xx}+g(u)&=& f(x, t),\quad (x, t)\in (0, L)\times (0, T),\\
u(x, 0)=\phi(x),\quad u_t(x, 0)&=& \psi(x),\quad x\in [0, L],\\ \\
u_x(0, t)= F(u(0, t))+\alpha(t),\quad u_x(L, t)&=& \beta(t) u(L, t)+\gamma(t),\quad t\in [0, T],
\end{array}
\right.
\end{equation}
where $g$, $f$, $\phi$, $\psi$, $\alpha$, $\beta$, $\gamma$ and $F$  are given functions. For $f\in \mathcal{C}([0, L]\times [0, T])$, $g\in \mathcal{C}(\mathbb{R})$, $F\in \mathcal{C}^1(\mathbb{R})$, $\phi\in \mathcal{C}^2([0, L])$, $\psi\in \mathcal{C}^1([0, L])$, $\alpha, \beta, \gamma\in \mathcal{C}^1([0, T])$, necessary conditions for solvability of the problem \eqref{2.2} in the class $\mathcal{C}^2([0, L]\times [0, T])$ are given in \cite{kharibeg}. To prove the main results in \cite{kharibeg}, the authors reduce \eqref{2.2} to an equivalent system of Volterra type in the class of continuous functions.

Note also that \eqref{2.2} was studied in \cite{boc, now, vit} in the energy space using Fourier method.\\

In the present paper we propose a new approach based on the fixed point theory on cones for the sum of two operators. Before stating our main result we precise the assumptions made on the nonlinearity and the initial data. We suppose that $f$ is continuous and satisfies

\begin{equation}
\label{H1}
0\leq f(t, x, u)\leq \sum_{j=1}^l c_j(t, x) |u|^{p_j},\quad (t, x, u)\in [0, \infty)\times [0, L]\times \mathbb{R},
\end{equation}
where $p_j>0$, $c_j\in \mathcal{C}([0, \infty)\times [0, L])$, $j\in \{1, \ldots, l\}$, $l\in \mathbb{N}$. For the initial data $u_0, u_1$ we make the following assumption.
\begin{equation}
\label{H2}
\left\{\begin{array}{ll}
u_0\in \mathcal{C}^2([0, L]),\; u_1\in \mathcal{C}^1([0, L]),& u_0(0)=u_{0x}(L)=0,\;  u_1(0)=u_{1x}(L)=0,\\
0\leq u_0, u_1<r\;\;\mbox{on}\;\;[0, L],\;\; u_0>0&\;\;\mbox{on}\;\; [\frac{L}{3}, \frac{L}{2}],
\end{array}
\right.
\end{equation}
where $r\in (0, 1)$.

Our main result reads as follows.
\begin{theorem}
\label{main}
Suppose that assumptions \eqref{H1}-\eqref{H2} are fulfilled.  Then the IBVP \eqref{1} has at least one non-negative solution $u\in \mathcal{C}^2([0, \infty)\times [0, L])$.
\end{theorem}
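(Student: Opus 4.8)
The plan is to recast problem \eqref{1} as an equivalent fixed-point equation in a suitable cone and then to invoke the sum-of-operators fixed-point theorem alluded to in the abstract. First I would integrate the equation $u_{tt}=u_{xx}+f(t,x,u)$ twice in $t$, incorporating the initial data $u_0,u_1$, and resolve the spatial part against the mixed boundary conditions $u(t,0)=u_x(t,L)=0$ by means of the associated Duhamel propagator. Concretely, the Dirichlet condition at $x=0$ and the Neumann condition at $x=L$ are encoded by odd reflection at $0$ and even reflection at $L$, producing a $4L$-periodic extension to which a d'Alembert-type formula applies; the compatibility relations $u_0(0)=u_{0x}(L)=u_1(0)=u_{1x}(L)=0$ in \eqref{H2} are what guarantee that the resulting representation has the regularity required for a classical solution. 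This would yield an integral equation $u=\mathcal{G}u$ whose right-hand side is the sum of a fixed function built from $u_0,u_1$ and a double integral of $f(\tau,y,u(\tau,y))$ against an explicit, derivative-free kernel, and I would check that $u\in\mathcal{C}^2$ solves \eqref{1} if and only if $u=\mathcal{G}u$.

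Next I would fix the analytic framework. Because the time variable ranges over $[0,\infty)$, I would work either in $\mathcal{C}([0,\infty)\times[0,L])$ with the topology of uniform convergence on the windows $[0,n]\times[0,L]$, or in a weighted sup-norm Banach space ensuring the integrals converge, and I would introduce the cone $P$ of non-negative functions. The sign hypothesis $f\ge 0$ from \eqref{H1}, together with $u_0,u_1\ge 0$, is exactly what makes the reformulated operator leave $P$ invariant, so that every fixed point is automatically a non-negative function. To apply the cited theorem I would then rewrite $u=\mathcal{G}u$ in the form $u=\mathcal{S}u+\mathcal{T}u$ demanded by that result, with $\mathcal{T}$ a completely continuous operator built from the nonlinear Duhamel term and $\mathcal{S}$ an auxiliary expansive map, the two being manufactured from $\mathcal{G}$ so that their sum reproduces $\mathcal{G}$ and so that $I-\mathcal{S}$ is invertible.

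The verification then rests on a few estimates. Complete continuity of $\mathcal{T}$ should follow from \eqref{H1}: the bound $0\le f\le\sum_{j=1}^l c_j(t,x)|u|^{p_j}$ gives, on each bounded subset and each compact time window, uniform bounds and equicontinuity for $\mathcal{T}$, whence Arzel\`a--Ascoli applies; the restriction $0\le u_0,u_1<r<1$ keeps the powers $|u|^{p_j}$ under control and supplies the a priori bound needed to confine the fixed point to a prescribed bounded subset of $P$. The strict positivity $u_0>0$ on $[\tfrac{L}{3},\tfrac{L}{2}]$ in \eqref{H2} is used to establish the lower (cone-expansion) inequality on the inner boundary, ruling out the trivial solution and certifying that the fixed point produced is admissible. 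With cone invariance, complete continuity, and the boundary inequalities in hand, the cited theorem yields a fixed point $u\in P$, which by construction lies in $\mathcal{C}^2([0,\infty)\times[0,L])$ and solves \eqref{1}.

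The step I expect to be the main obstacle is the construction of the integral representation itself: one must encode the Dirichlet condition at $x=0$ and the Neumann condition at $x=L$ simultaneously while controlling the signs introduced by the reflections, so that the kernel stays compatible with the cone $P$ and the non-negativity of solutions is genuinely preserved. The second delicate point is verifying the norm and boundary inequalities of the abstract theorem uniformly in the unbounded time variable, which is precisely where the weighted norm and the smallness of the data in \eqref{H2} will have to be exploited with care.
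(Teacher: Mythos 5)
Your high-level plan --- recast \eqref{1} as an integral equation, work in a cone of non-negative functions, and invoke the sum-of-two-operators fixed point result (Proposition \ref{prop2c}) --- is the same as the paper's, but your concrete reformulation is different, and it fails precisely at the step you yourself single out as the main obstacle; that failure is not technical but structural. The wave group associated with the boundary conditions $u(t,0)=u_x(t,L)=0$ does not preserve non-negativity, so no kernel produced by odd reflection at $x=0$ and even reflection at $x=L$ can be ``compatible with the cone $P$.'' Concretely, take $f\equiv 0$ (admissible in \eqref{H1}), $u_1\equiv 0$, and $u_0(x)=\epsilon\sin(\pi x/(2L))$ with $0<\epsilon<r$; these data satisfy \eqref{H2}. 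The unique classical solution of \eqref{1} is
\begin{equation*}
u(t,x)=\epsilon\,\cos\!\left(\frac{\pi t}{2L}\right)\sin\!\left(\frac{\pi x}{2L}\right),
\end{equation*}
which is strictly negative on $(0,L]$ at $t=2L$. With $f\equiv 0$ your operator $\mathcal{G}$ is constant, equal to this free evolution, so $\mathcal{G}(P)\not\subset P$ and the only fixed point of $\mathcal{G}$ lies outside the cone: your scheme can produce no fixed point in $P$ at all. Thus the claim that $f\ge 0$ and $u_0,u_1\ge 0$ are ``exactly what makes the reformulated operator leave $P$ invariant'' is false; hyperbolic propagators, unlike parabolic ones, obey no maximum principle, and this cannot be repaired by care with the reflections or with weights. (The same example shows that the non-negativity assertion cannot follow from \eqref{H1}--\eqref{H2} alone, which is why the paper's own argument has to bring in the additional quantitative hypotheses \eqref{H3}--\eqref{H4} and confine candidates to the cone by construction rather than by positivity of a solution kernel.)

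Separately, even leaving positivity aside, your outline lacks the device on which the paper's verification of Proposition \ref{prop2c} actually runs. The paper never uses d'Alembert's formula: it inverts $-\partial_x^2$ under the mixed boundary conditions through the kernel $\int_0^x x_1(\cdot)\,dx_1+x\int_x^L(\cdot)\,dx_1$, integrates twice in time, and then smears the resulting identity against $(t-t_1)^2(x-x_1)^2g(t_1,x_1)$, where $g$ is an auxiliary non-negative function subject to \eqref{H3}--\eqref{H4}. The smallness of $g$ is what yields $\Vert Gu\Vert\le 4rA$ and $\Vert Fu\Vert\le 4\left(r+\sum_{j=1}^l r^{p_j}\right)A$ (Lemma \ref{lemma2}), and those bounds are in turn what make $I-T=\epsilon I-G$ Lipschitz invertible, give $S(\overline{U})\subset (I-T)(\Omega)$, and allow the exclusion condition $Su\neq(I-T)(\lambda u)$ on $\partial U$ to be checked; these three verifications constitute essentially all of Section 3. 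Your proposal defers exactly these points to operators ``manufactured from $\mathcal{G}$'' without any source of smallness --- nothing in your setup plays the role of $g$ and $A$ --- so the analytic core of the argument is absent, not merely postponed. Finally, your fallback framework of uniform convergence on compact time windows is a Fr\'echet-space topology, whereas Proposition \ref{prop2c} is stated for Banach spaces, so that choice would also have to be abandoned or the abstract theorem re-proved.
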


The set up of the paper is as follows. In the next Section we give some useful tools and preliminary results. In Section 3, we prove our main result. In the last section, Section 4, we give an example.

\section{Background and Preliminary Results}
Let $X$ be a real Banach space.
\begin{definition}
A mapping $K: X\to X$ is said to be completely continuous if it is continuous and maps bounded sets into relatively compact sets.
\end{definition}
The concept for $k$-set contraction  is related to that of the Kuratowski measure of noncompactness which we recall for completeness.
\begin{definition}
Let $\Omega_X$ be the class of all bounded sets of $X$. The Kuratowski measure of noncompactness $\alpha: \Omega_X\to [0, \infty)$ is defined by
\begin{equation*}
\alpha(Y)= \inf\left\{ \delta>0: Y= \bigcup_{j=1}^m Y_j\quad \text{and}\quad \text{diam}(Y_j)\leq \delta,\quad j\in \{1, \ldots, m\}\right\},
\end{equation*}
where $\text{diam}(Y_j)=\sup\{\Vert x-y\Vert_X: x, y\in Y_j\}$ is the diameter of $Y_j$, $j\in \{1, \ldots, m\}$.
\end{definition}
For the main properties of measure of noncompactness we refer the reader to \cite{deim}.
\begin{definition}
A mapping $K: X\to X$ is said to be $k$-set contraction if there exists a constant $k\geq 0$ such that
\begin{equation*}
\alpha(K(Y))\leq k\alpha(Y)
\end{equation*}
for any bounded set $Y\subset X$.
\end{definition}
Obviously, if $K: X\to X$ is a completely continuous mapping, then $K$ is $0$-set contraction(see \cite{drabek}).
\begin{definition} Let $X$ and $Y$ be real Banach spaces.
A mapping $K: X\to Y$ is said to be expansive if there exists a constant $h>1$ such that
\begin{equation*}
\Vert Kx-Ky\Vert_Y \geq h\Vert x-y\Vert_X
\end{equation*}
for any $x, y\in X$.
\end{definition}
\begin{definition}
A closed, convex set $\mathcal{P}$ in $X$ is said to be cone if
\begin{enumerate}
\item $\alpha x\in \mathcal{P}$ for any $\alpha\geq 0$ and for any $x\in \mathcal{P}$,
\item $x, -x\in \mathcal{P}$ implies $x=0$.
\end{enumerate}
\end{definition}
The following Proposition will be used to prove our main result. We refer the reader to \cite{DjebaMeb} for more detail.

\begin{proposition}\label{prop2c}  Suppose that $\mathcal{P}$ is a cone in $X$. Let $\Omega$ be a subset of $\mathcal{P}$ and  $U$ be a bounded open subset of $\mathcal{P}$ with $0\in U.$
Assume that the mapping $T: \Omega\subset \mathcal{P} \rightarrow X$ be
 such that $(I-T)$ is Lipschitz invertible with constant $\gamma>0,$  $F: \overline{U}\rightarrow X$  is a $k$-set contraction with
$0\le k<\gamma^{-1}$,  and $F(\overline{U})\subset(I-T)(\Omega).$
If
$$
Fx\neq (I-T)(\lambda x)\; \mbox{ for all } x\in\partial U\bigcap
\Omega,\,\lambda\geq 1\,\mbox{ and } \, \lambda x\in\Omega,
$$
then the mapping $T+F$ has at least one fixed point in $U\bigcap \Omega$.
\end{proposition}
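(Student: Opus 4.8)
The plan is to convert the fixed-point problem for the sum $T+F$ into a single fixed-point problem for a strict set contraction on the cone $\mathcal{P}$, and then to invoke the fixed-point index. First, observe that $x\in\Omega\cap\overline{U}$ is a fixed point of $T+F$ precisely when $Fx=(I-T)x$, that is, when $x=(I-T)^{-1}(Fx)$. Since $(I-T)$ is Lipschitz invertible with constant $\gamma$, the inverse $(I-T)^{-1}:(I-T)(\Omega)\to\Omega$ is a well-defined single-valued map, and the inclusion $F(\overline{U})\subset(I-T)(\Omega)$ guarantees that the composition
$$\Phi:=(I-T)^{-1}\circ F:\overline{U}\longrightarrow\Omega\subset\mathcal{P}$$
is well defined. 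Thus it suffices to produce a fixed point of $\Phi$ in $U$; any such point lies in $\Omega$ automatically, since the range of $(I-T)^{-1}$ is contained in $\Omega$.

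Next I would check that $\Phi$ is a strict set contraction. Using that the Kuratowski measure $\alpha$ contracts by the Lipschitz constant under Lipschitz maps, for every bounded $Y\subset\overline{U}$ we obtain
$$\alpha(\Phi(Y))=\alpha\big((I-T)^{-1}(F(Y))\big)\le\gamma\,\alpha(F(Y))\le\gamma k\,\alpha(Y),$$
where the last inequality uses that $F$ is a $k$-set contraction. Since $0\le k<\gamma^{-1}$ we have $\gamma k<1$, so $\Phi$ is a $(\gamma k)$-set contraction with constant strictly below $1$, and it is continuous as a composition of continuous maps.

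To conclude, consider the homotopy $H:[0,1]\times\overline{U}\to\mathcal{P}$, $H(t,x)=t\,\Phi x$, noting that each $H(t,\cdot)$ is a $(t\gamma k)$-set contraction with constant $\le\gamma k<1$. I claim $H(t,x)\ne x$ for all $x\in\partial U\cap\Omega$ and $t\in[0,1]$. Indeed, $t=0$ forces $x=0\in U$, contradicting $x\in\partial U$; and if $t\in(0,1]$ with $t\Phi x=x$, then $\lambda:=1/t\ge 1$ gives $\Phi x=\lambda x$. Because $\Phi x=(I-T)^{-1}(Fx)\in\Omega$, the point $\lambda x$ lies in $\Omega$, and applying $(I-T)$ yields $Fx=(I-T)(\lambda x)$ with $\lambda\ge 1$ and $\lambda x\in\Omega$, contradicting the standing hypothesis. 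By the homotopy invariance and normalization of the fixed-point index for strict set contractions on the cone $\mathcal{P}$,
$$i(\Phi,U,\mathcal{P})=i(0,U,\mathcal{P})=1\ne 0,$$
and the solution property yields $x\in U$ with $\Phi x=x$, hence a fixed point of $T+F$ in $U\cap\Omega$.

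The main obstacle is the analytic input behind the final step: one needs a fixed-point index (equivalently, a topological degree) available for the class of strict $k$-set contractions with $k<1$ on a cone, together with its homotopy-invariance, normalization and solution properties; verifying that $H$ is an admissible homotopy for this index --- joint continuity plus a uniform set-contraction bound below $1$ --- is the technical heart, and the reduction in the first three paragraphs is then routine. A secondary point requiring care is the domain bookkeeping: $\Phi$ is defined only where $F$ lands in $(I-T)(\Omega)$, which is exactly why the hypothesis $F(\overline{U})\subset(I-T)(\Omega)$ is imposed and why the constraint $\lambda x\in\Omega$ appears explicitly in the boundary condition rather than being automatic.
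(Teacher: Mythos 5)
Your proposal cannot be matched against an in-paper argument, because the paper does not prove Proposition \ref{prop2c} at all: it imports the result from the cited reference \cite{DjebaMeb} of Djebali and Mebarki. That said, your route --- rewriting the fixed-point problem for $T+F$ as $x=\Phi x$ with $\Phi=(I-T)^{-1}\circ F$, checking that $\Phi$ is a $(\gamma k)$-set contraction with $\gamma k<1$, and computing $i(\Phi,U,\mathcal{P})=1$ via the radial homotopy $H(t,x)=t\,\Phi x$ --- is precisely the standard argument behind the cited result, and the individual steps you carry out are correct: the estimate $\alpha(\Phi(Y))\le\gamma\,\alpha(F(Y))\le\gamma k\,\alpha(Y)$, the admissibility at $t=0$ from $0\in U$, the automatic membership $\lambda x=\Phi x\in\Omega$, and the translation of $t\,\Phi x=x$ into $Fx=(I-T)(\lambda x)$ with $\lambda=1/t\ge 1$.

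One point needs repair. You claim $H(t,x)\ne x$ only for $x\in\partial U\cap\Omega$, but homotopy invariance of the index $i(\cdot,U,\mathcal{P})$ requires fixed-point freeness on all of $\partial U$. At $t=1$ the restriction is harmless, since any fixed point of $\Phi$ lies in $\Omega$ (the range of $(I-T)^{-1}$); but for $t\in(0,1)$ a solution of $t\,\Phi x=x$ with $x\in\partial U$ need not lie in $\Omega$ ($\Omega$ is an arbitrary subset of $\mathcal{P}$, not assumed star-shaped), and for such $x$ the hypothesis as literally displayed (quantified over $x\in\partial U\cap\Omega$) yields no contradiction, so as written your appeal to homotopy invariance is a non sequitur. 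Notice, however, that your own derivation for $t\in(0,1]$ never uses $x\in\Omega$: it produces $\lambda x\in\Omega$ and $Fx=(I-T)(\lambda x)$ for an arbitrary $x\in\partial U$. So the argument closes once the boundary condition is read as ``$Fx\ne(I-T)(\lambda x)$ for all $x\in\partial U$ and $\lambda\ge 1$ with $\lambda x\in\Omega$'' --- which is exactly the form in which the present paper verifies it in Section 3, step (4), where the contradiction hypothesis is taken with $u\in\partial U$, not $u\in\partial U\cap\Omega$. You should therefore state your no-fixed-point claim on all of $\partial U$ and note that the displayed condition must be (and, in its actual use here, is) understood in this slightly stronger form; with that reading, and granting the classical fixed point index for strict set contractions on cones (the analytic input supplied by the cited reference), your proof is complete.
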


In order to apply Proposition \ref{prop2c} and prove our main result, we will make the following assumptions.

There exist positive constants $\epsilon$, $A$, $R$ and  $b_1$  such that
   \begin{equation}
\label{H3}
\left\{\begin{array}{ccll}
\epsilon, A\in (0, 1),\quad 4A<\epsilon,\quad R&\geq& r,\quad b_1>1,\\
    \epsilon r+4\left(r+ \sum_{j=1}^l r^{p_j}\right)A&\leq& (\epsilon-4A)R,\\
    4\left(r+2R+ \sum_{j=1}^l r^{p_j}\right)A&<& \frac{1}{b_1}.
    \end{array}
\right.
\end{equation}
Denote
\begin{equation*}
B_1= \max\{1, 2L, 2L^2, 2L^3, 2L^4\}.
\end{equation*}

There exist  a non-negative function $g\in \mathcal{C}([0, \infty)\times [0, L])$ and $m\in (0,1)$  such that
\begin{equation}
\label{H4}
\left\{\begin{array}{ccll}
B_1(1+t+t^2+t^3+t^4) \dint_0^t \dint_0^L g(t_1, x_1) dx_1dt_1&\leq& A,\\\\
B_1(1+t+t^2+t^3+t^4)\dint_0^t \dint_0^L g(t_1, x_1) \dint_0^L \dint_0^{t_1}c_j(t_2, x_2) dt_2 dx_2 dx_1dt_1&\leq& A,\quad j\in \{1, \ldots, l\},\\\\
\frac{1-m}{4}\dint_1^{3\over 2}\dint_{\frac{L}{2}}^{\frac{2}{3}L}(2-t_1)^2(L-x_1)^2g(t_1, x_1)\dint_{\frac{L}{3}}^{\frac{L}{2}}x_2 u_0(x_2) dx_2 dx_1dt_1&\geq& \frac{A}{b_1}.
\end{array}
\right.
\end{equation}
In the last section we will give an example for  constants $\epsilon$, $A$, $r$, $R$, $m$, $b_1$ and for a function $g$ that satisfy \eqref{H3} and \eqref{H4}.

Let $E= \mathcal{C}^2([0, \infty)\times [0, L])$ be endowed with the norm
$$
\|u\|=\|u\|_\infty+\left\|\frac{\partial u}{\partial t}\right\|_\infty+\left\|\frac{\partial^2 u}{\partial t^2}\right\|_\infty+\left\|\frac{\partial u}{\partial x}\right\|_\infty+\left\|\frac{\partial^2 u}{\partial x^2}\right\|_\infty,
$$
provided it exists,
where $\|v\|_\infty=\displaystyle\sup_{(t, x)\in [0, \infty)\times [0, L]}|v(t,x)|.$
\begin{lemma}
\label{lemma1} Let $u\in E$ be a solution to the integral equation
\begin{equation}
\label{2}
\begin{array}{lll}
0&=& -\frac{1}{4} \int_0^t (t-t_1)^2 \int_0^x (x-x_1)^2 g(t_1, x_1) \int_0^{t_1} (t_1-t_2) u(t_2, x_1) dt_2 dx_1 dt_1\\
&&+\frac{1}{4}\int_0^t (t-t_1)^2 \int_0^x (x-x_1)^2g(t_1, x_1) \int_0^{x_1}x_2\bigg(-u(t_1, x_2)+u_0(x_2)+t_1 u_1(x_2)\\
&&+ \int_0^{t_1} (t_1-t_2)f(t_2, x_2, u(t_2, x_2))dt_2\bigg)dx_2 dx_1 dt_1\\
&&+ \frac{1}{4}\int_0^t (t-t_1)^2 \int_0^x x_1(x-x_1)^2 g(t_1, x_1)\int_{x_1}^L \bigg(-u(t_1, x_2)+u_0(x_2)+t_1 u_1(x_2)\\
&&+ \int_0^{t_1} (t_1-t_2)f(t_2, x_1, u(t_2, x_1)) dt_2\bigg) dx_2 dx_1 dt_1,\;\;\;(t, x)\in [0, \infty)\times [0, L].
\end{array}
\end{equation}

Then $u$ solves the IBVP \eqref{1}.
\end{lemma}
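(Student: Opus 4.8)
The plan is to recognize \eqref{2} as the sixfold iterated integral of the twice-time-integrated (Duhamel) form of \eqref{1}, and to recover \eqref{1} by differentiating \eqref{2} three times in $t$ and three times in $x$. The quadratic kernels $(t-t_1)^2$ and $(x-x_1)^2$ are tailored for exactly this: each vanishes to second order on the diagonal, so $\partial_t$ and $\partial_x$ lower its degree without leaving boundary contributions, and $\partial_t^3$, $\partial_x^3$ annihilate them completely.

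Concretely, I would write the right-hand side of \eqref{2} as
\[
\tfrac14\int_0^t (t-t_1)^2\!\int_0^x (x-x_1)^2\, g(t_1,x_1)\,\mathcal I(t_1,x_1)\,dx_1\,dt_1,
\]
where $\mathcal I$ gathers the three inner brackets. First I would justify differentiation under the integral sign, which is legitimate since $u\in E=\mathcal C^2$ and $f,g,c_j,u_0,u_1$ are continuous, so $\mathcal I$ and all intermediate integrands are continuous. Applying $\partial_t^3$ removes the time kernel (and replaces $t_1$ by $t$), and applying $\partial_x^3$ removes the space kernel, with the extra factor $x_1$ of the third term and the split integrals $\int_0^{x_1}$, $\int_{x_1}^L$ remaining inside $\mathcal I$. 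This leaves the pointwise identity $g(t,x)\,\mathcal I(t,x)=0$, whence $\mathcal I\equiv 0$.

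The structural heart of the argument is the identification of $\mathcal I$. Setting
\[
v(t,x)=-u(t,x)+u_0(x)+t\,u_1(x)+\int_0^t (t-t_2)\,f(t_2,x,u(t_2,x))\,dt_2,
\]
the two spatial brackets assemble into $\chi(t,x)=\int_0^x x_2\,v(t,x_2)\,dx_2+x\int_x^L v(t,x_2)\,dx_2$, which is precisely the Green's representation for $-\partial_{xx}$ under $u(t,0)=u_x(t,L)=0$: one checks $\partial_x^2\chi=-v$, $\chi(t,0)=0$ and $\partial_x\chi(t,L)=0$. The temporal bracket is $U(t,x)=\int_0^t (t-t_2)\,u(t_2,x)\,dt_2$, for which $\partial_t^2 U=u$. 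Since $\mathcal I=-U+\chi=0$, the boundary values of $\chi$ force $U(t,0)=0$ and $\partial_x U(t,L)=0$, i.e. $u(t,0)=u_x(t,L)=0$ after applying $\partial_t^2$; and the relation $\partial_x^2 U=\partial_x^2\chi=-v$ reads
\[
u(t,x)=u_0(x)+t\,u_1(x)+\int_0^t (t-t_2)\big[u_{xx}(t_2,x)+f(t_2,x,u(t_2,x))\big]\,dt_2.
\]
Differentiating this twice in $t$ gives $u_{tt}-u_{xx}=f(t,x,u)$, while evaluating it and its first $t$-derivative at $t=0$ recovers $u(0,x)=u_0(x)$ and $u_t(0,x)=u_1(x)$.

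The main obstacle is twofold. First, the sixfold differentiation demands careful bookkeeping to confirm that no spurious boundary terms survive and that the three inner pieces genuinely recombine into the single Green's combination $\chi$ with $\partial_x^2\chi=-v$; this is where the precise placement of the weights $x_2$, $x_1$ and of the split limits $\int_0^{x_1}$, $\int_{x_1}^L$ is essential. Second, the passage from $g\,\mathcal I=0$ to $\mathcal I\equiv 0$ relies on $g$ being positive on a dense set, which is guaranteed by the explicit $g$ constructed in Section 4, together with the continuity of $\mathcal I$. The remaining verifications are routine calculus of iterated integrals with smooth kernels.
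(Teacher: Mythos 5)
Your proposal is correct and takes essentially the same route as the paper: differentiate \eqref{2} three times in $t$ and three times in $x$ to strip the kernels, cancel $g$, and recover the PDE, the initial conditions, and the boundary conditions from the resulting inner identity --- your Green's-function packaging of $\chi$ and the Duhamel-type formula is just a tidier organization of the paper's further differentiations of its equations (3)--(6) and evaluations at $t=0$, $x=0$, $x=L$. The caveat you flag, that passing from $g\,\mathcal{I}=0$ to $\mathcal{I}\equiv 0$ requires $g>0$ on a dense set (which mere nonnegativity of $g$ in \eqref{H4} does not guarantee), concerns a division the paper performs silently, so both arguments share that reliance.
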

\begin{proof}
We differentiate trice in $t$ and then trice  in $x$ the equation \eqref{2} and we get
\begin{eqnarray*}
0&=& -g(t, x)\int_0^t (t-t_1) u(t_1, x) dt_1\\
&&+g(t, x) \int_0^x x_1\bigg( -u(t, x_1)+u_0(x_1) +t u_1(x_1)\\
&&+\int_0^t (t-t_1)f(t_1, x_1, u(t_1, x_1))dt_1\bigg)dx_1\\
&&+ x g(t, x) \int_x^L \bigg( -u(t, x_1) +u_0(x_1) +t u_1(x_1)\\
&&+ \int_0^t (t-t_1)f(t_1, x_1, u(t_1, x_1))dt_1\bigg)dx_1,
\end{eqnarray*}
$(t, x)\in [0, \infty)\times [0, L]$, whereupon
\begin{equation}
\label{3}
\begin{array}{lll}
0&=& -\int_0^t (t-t_1) u(t_1, x) dt_1\\
&&+ \int_0^x x_1\bigg( -u(t, x_1)+u_0(x_1) +t u_1(x_1)\\
&&+\int_0^t (t-t_1)f(t_1, x_1, u(t_1, x_1))dt_1\bigg)dx_1\\
&&+ x  \int_x^L \bigg( -u(t, x_1) +u_0(x_1) +t u_1(x_1)\\
&&+ \int_0^t (t-t_1)f(t_1, x_1, u(t_1, x_1))dt_1\bigg)dx_1,
\end{array}
\end{equation}
$(t, x)\in [0, \infty)\times [0, L]$. Now we differentiate the last equation with respect to $t$ and we find
\begin{equation}
\label{4}
\begin{array}{lll}
0&=& -\int_0^t u(t_1, x) dt_1\\
&&+ \int_0^x x_1\bigg( -u_t(t, x_1) +u_1(x_1) +\int_0^t f(t_1, x_1, u(t_1, x_1))dt_1\bigg) dx_1\\
&&+ x \int_x^L \bigg(-u_t(t, x_1) +u_1(x_1) +\int_0^tf(t_1, x_1, u(t_1, x_1))dt_1\bigg) dx_1,
\end{array}
\end{equation}
$(t, x)\in [0, \infty)\times [0, L]$, which we differentiate in $t$ and we arrive at
\begin{equation}
\label{5}
\begin{array}{lll}
0&=& -u(t, x)\\
&&+\int_0^x x_1\bigg(-u_{tt}(t, x_1)+f(t, x_1, u(t, x_1))\bigg) dx_1\\
&&+ x\int_x^L \bigg(-u_{tt}(t, x_1) +f(t, x_1, u(t, x_1))\bigg)dx_1,
\end{array}
\end{equation}
$(t, x)\in [0, \infty)\times [0, L]$. Now we differentiate with respect to $x$ the last equation and we find
\begin{equation}
\label{6}
\begin{array}{lll}
0&=& -u_x(t, x)\\
&&+ x\left(-u_{tt}(t, x) +f(t, x, u(t, x))\right)\\
&&-x\left(-u_{tt}(t, x)+f(t, x, u(t, x))\right)\\
&&+ \int_x^L \left(-u_{tt}(t, x_1)+f(t, x_1, u(t, x_1))\right)dx_1\\
&=& -u_x(t, x)\\
&&+ \int_x^L \left(-u_{tt}(t, x_1) +f(t, x_1, u(t, x_1))\right)dx_1,
\end{array}
\end{equation}
$(t, x)\in [0, \infty)\times [0, L]$. Now we differentiate the last equation with respect to $x$ and we find
\begin{eqnarray*}
0&=& -u_{xx}(t, x)+u_{tt}(t, x)-f(t, x, u(t, x)),\quad (t, x)\in [0, \infty)\times [0, L].
\end{eqnarray*}
We put $t=0$ in \eqref{3} and we find
\begin{eqnarray*}
0&=& \int_0^x x_1\left(-u(0, x_1)+u_0(x_1)\right)dx_1\\
&&+ x\int_x^L \left(-u(0, x_1)+u_0(x_1)\right)dx_1,\quad x\in [0, L],
\end{eqnarray*}
which we differentiate in $x$ and we get
\begin{eqnarray*}
0&=& x(-u(0, x)+u_0(x))+\int_x^L (-u(0, x_1) +u_0(x_1))dx_1\\
&&-x(-u(0, x)+u_0(x))\\
&=& \int_x^L (-u(0, x_1)+u_0(x_1))dx_1,\quad x\in [0, L],
\end{eqnarray*}
again we differentiate in $x$ and we find
\begin{equation*}
u(0, x)=u_0(x),\quad x\in [0, L].
\end{equation*}
Now we put $t=0$ in \eqref{4} and we get
\begin{eqnarray*}
0&=& \int_0^x x_1\left(-u_t(0, x_1)+u_1(x_1)\right)dx_1\\
&&+ x\int_x^L \left(-u_t(0, x_1)+u_1(x_1)\right)dx_1,\quad x\in [0, L],
\end{eqnarray*}
which we differentiate twice in $x$ and we find
\begin{equation*}
u_t(0, x)=u_1(x),\quad x\in [0, L].
\end{equation*}
Now we put $x=0$ in \eqref{5} and we get
\begin{equation*}
u(t, 0)=0,\quad t\in [0, \infty).
\end{equation*}
We put $x=L$ in \eqref{6} and we find
\begin{equation*}
u_x(t, L)=0,\quad t\in [0, \infty).
\end{equation*}
This completes the proof.
\end{proof}
For $u\in E$ and $(t, x)\in [0, \infty)\times [0, L]$, define
\begin{eqnarray*}
Gu(t, x)&=& -\frac{1}{4} \int_0^t (t-t_1)^2 \int_0^x (x-x_1)^2 g(t_1, x_1) \int_0^{t_1} (t_1-t_2) u(t_2, x_1) dt_2 dx_1 dt_1,\\
F_1u(t, x)&=& \int_0^x x_1\bigg(-u(t, x_1)+u_0(x_1) +t u_1(x_1)\\
&&+\int_0^t (t-t_1)f(t_1, x_1, u(t_1, x_1))dt_1\bigg)dx_1,\\
F_2 u(t, x)&=& \int_x^L \bigg(-u(t, x_1)+u_0(x_1)+t u_1(x_1)\\
&&+ \int_0^t (t-t_1) f(t_1, x_1, u(t_1, x_1))dt_1\bigg)dx_1,\\
F_3 u(t, x)&=& F_1 u(t, x)+x F_2 u(t, x),\\
Fu(t, x)&=&\frac{1}{4} \int_0^t \int_0^x (t-t_1)^2(x-x_1)^2 g(t_1, x_1)F_3 u(t_1, x_1)dx_1 dt_1.
\end{eqnarray*}
Observe that the equation \eqref{2} can be written in the form
\begin{equation*}
Gu(t, x)+Fu(t, x)=0,\quad (t, x)\in [0, \infty)\times [0, L],\quad u\in E.
\end{equation*}
\begin{lemma}
\label{lemma3.1.3.1} Suppose \eqref{H1} and \eqref{H2} are fulfilled. Then, for $u\in \mathcal{C}([0, \infty)\times [0, L])$, $|u|\leq r$ on $[0, \infty)\times [0, L]$, we have
\begin{eqnarray*}
|F_1u(t, x)| &\leq& 2L^2 r(1+t)+Lt \sum_{j=1}^l r^{p_j}\int_0^L \int_0^t c_j(t_1, x_1) dt_1 dx_1,\\
|F_2u(t, x)|&\leq& 2rL (1+t)+t \sum_{j=1}^l r^{p_j} \int_0^L \int_0^t c_j(t_1, x_1)dt_1 dx_1,
\end{eqnarray*}
and
\begin{equation}
\label{3.2.3.2} |F_3 u(t, x)|\leq 4L^2 r(1+t)+2Lt \sum_{j=1}^l r^{p_j}\int_0^L \int_0^t c_j(t_1, x_1) dt_1 dx_1.
\end{equation}
\end{lemma}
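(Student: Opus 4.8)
The plan is to estimate $F_1u$, $F_2u$ and $F_3u$ pointwise via the triangle inequality, bounding each of the four terms inside the brackets separately with the help of \eqref{H1} and \eqref{H2}. Throughout I would use the standing hypothesis $|u|\le r$, the initial-data bounds $0\le u_0,u_1<r$ from \eqref{H2}, and the single elementary inequality $2+t\le 2(1+t)$, valid for $t\ge 0$, which converts the prefactor $(2+t)$ produced by the three linear terms into the stated $2(1+t)$.

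First I would treat $F_1u$. Inside the bracket the first three terms $-u(t,x_1)$, $u_0(x_1)$, $t\,u_1(x_1)$ are dominated by $r$, $r$ and $tr$ respectively, so their combined modulus is at most $r(2+t)$. For the Duhamel term, since $t-t_1\le t$ on $[0,t]$ and $|u|\le r$, assumption \eqref{H1} gives
$$
\left|\int_0^t (t-t_1) f(t_1, x_1, u(t_1, x_1))\, dt_1\right| \le t \sum_{j=1}^l r^{p_j} \int_0^t c_j(t_1, x_1)\, dt_1 .
$$
Multiplying by the weight $x_1\le L$ and integrating over $[0,x]\subseteq[0,L]$, the linear part contributes at most $L^2 r(2+t)\le 2L^2 r(1+t)$, while the Duhamel part contributes at most $Lt\sum_{j} r^{p_j}\int_0^L\int_0^t c_j\,dt_1\,dx_1$ after enlarging $\int_0^x$ to $\int_0^L$. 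Summing the two yields the first inequality.

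The estimate for $F_2u$ is identical in spirit, but with the weight $x_1$ replaced by $1$ and with the domain $[x,L]$ of length at most $L$; this drops one factor of $L$ from the linear part, giving $2Lr(1+t)$, and leaves the Duhamel part without the extra $L$, which is the second inequality. Finally, \eqref{3.2.3.2} follows at once from $F_3u=F_1u+xF_2u$ together with $x\le L$: by the triangle inequality $|F_3u|\le|F_1u|+L|F_2u|$, and substituting the two bounds just obtained and collecting terms produces exactly $4L^2 r(1+t)+2Lt\sum_j r^{p_j}\int_0^L\int_0^t c_j\,dt_1\,dx_1$.

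There is no genuine obstacle here: the argument is pure bookkeeping with absolute values. The only points needing a little care are tracking the powers of $L$ generated by the weight $x_1$ and the integration domains, and uniformly replacing the prefactor $(2+t)$ by $2(1+t)$ so that the constants match the statement exactly; both are entirely routine.
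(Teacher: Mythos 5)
Your proof is correct and follows essentially the same route as the paper's: bound the three linear terms by $r(2+t)\le 2r(1+t)$, bound the Duhamel term via \eqref{H1} with $(t-t_1)\le t$ and $|u|^{p_j}\le r^{p_j}$, track the factors of $L$ coming from the weight $x_1$ and the integration domains, and obtain \eqref{3.2.3.2} from $|F_3u|\le |F_1u|+L|F_2u|$. No gaps; the bookkeeping matches the paper's estimates exactly.
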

\begin{proof}
Let $u\in \mathcal{C}([0, \infty)\times [0, L])$ and $|u|\leq r$ on $[0, \infty)\times [0, L]$. Then, using \eqref{H1} and \eqref{H2}, we get
\begin{eqnarray*}
|F_1u(t, x)|&=& \bigg|\int_0^x x_1\bigg(-u(t, x_1)+u_0(x_1) +t u_1(x_1)\\
&&+\int_0^t (t-t_1)f(t_1, x_1, u(t_1, x_1))dt_1\bigg)dx_1\bigg|\\
&\leq& \int_0^x x_1\bigg( |u(t, x_1)|+|u_0(x_1)|+t |u_1(x_1)|\\
&&+ \int_0^t (t-t_1) |f(t_1, x_1, u(t_1, x_1))| dt_1\bigg) dx_1\\
&\leq& L\int_0^L\bigg((2+t)r+t \sum_{j=1}^l \int_0^t c_j(t_1, x_1)  |u(t_1, x_1)|^{p_j}dt_1\bigg)dx_1\\
&\leq& L^2 (2+t) r +Lt \sum_{j=1}^l r^{p_j} \int_0^L \int_0^t c_j(t_1, x_1) dt_1 dx_1\\
&\leq& 2L^2 r(1+t) +Lt \sum_{j=1}^l r^{p_j} \int_0^L \int_0^t c_j(t_1, x_1) dt_1 dx_1
\end{eqnarray*}
and
\begin{eqnarray*}
|F_2u(t, x)|&=& \bigg|\int_x^L \bigg(-u(t, x_1)+u_0(x_1) +t u_1(x_1)\\
&&+\int_0^t (t-t_1)f(t_1, x_1, u(t_1, x_1))dt_1\bigg)dx_1\bigg|
\\
&\leq& \int_x^L \bigg( |u(t, x_1)|+|u_0(x_1)|+t |u_1(x_1)|\\
&&+ \int_0^t (t-t_1) |f(t_1, x_1, u(t_1, x_1))| dt_1\bigg) dx_1
\end{eqnarray*}
\begin{eqnarray*}
&\leq& \int_0^L\bigg((2+t)r+t \sum_{j=1}^l \int_0^t c_j(t_1, x_1)  |u(t_1, x_1)|^{p_j}dt_1\bigg)dx_1\\
&\leq& L (2+t) r +t \sum_{j=1}^l r^{p_j} \int_0^L \int_0^t c_j(t_1, x_1) dt_1 dx_1\\
&\leq& 2L r(1+t) +t \sum_{j=1}^l r^{p_j} \int_0^L \int_0^t c_j(t_1, x_1) dt_1 dx_1,
\end{eqnarray*}
$(t, x)\in [0, \infty)\times [0, L]$. Hence,  we get \eqref{3.2.3.2}. This completes the proof.
\end{proof}
\begin{lemma}
\label{lemma2}
Suppose \eqref{H1}, \eqref{H2}, \eqref{H3} and \eqref{H4} are fulfilled. Then, for $u\in E$ and $\Vert u\Vert \leq r$, we have
\begin{eqnarray*}
\Vert Gu\Vert&\leq& 4rA,\\
\Vert Fu\Vert&\leq& 4\left(r+ \sum_{j=1}^l r^{p_j}\right)A.
\end{eqnarray*}
\end{lemma}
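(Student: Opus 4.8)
The plan is to estimate separately each of the five terms that make up the norm $\|\cdot\|$, namely the sup-norms of $Gu$, $\partial_t Gu$, $\partial_t^2 Gu$, $\partial_x Gu$, $\partial_x^2 Gu$ (and likewise for $Fu$), and then to sum them. The crucial structural observation is that in both $Gu$ and $Fu$ the outer kernels $(t-t_1)^2$ and $(x-x_1)^2$ vanish to second order at the upper limits of integration. Consequently, differentiating up to twice in $t$ or twice in $x$ can be carried out under the integral sign, with every boundary contribution vanishing; for instance $\partial_t\int_0^t(t-t_1)^2\Phi\,dt_1=\int_0^t 2(t-t_1)\Phi\,dt_1$ and $\partial_t^2\int_0^t(t-t_1)^2\Phi\,dt_1=\int_0^t 2\Phi\,dt_1$, and analogously in $x$. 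Thus each of the ten required derivatives is again an iterated integral with a nonnegative kernel, and the factor $\tfrac14$ together with the numerical constants produced by the differentiations stays bounded by $1$.

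For $Gu$ I would use that $\|u\|\le r$ forces $\|u\|_\infty\le r$, so that $\int_0^{t_1}(t_1-t_2)|u(t_2,x_1)|\,dt_2\le r\,t_1^2/2$. Estimating each kernel crudely by $(t-t_1)^2\le t^2$, $(x-x_1)^2\le L^2$ and $t_1\le t$, every one of the five terms takes the form $(\mathrm{const})\,r\,t^{a}L^{b}\int_0^t\int_0^L g\,dx_1\,dt_1$ with $a\le 4$ and $b\le 2$. Since $t^a\le 1+t+t^2+t^3+t^4$ and $L^b\le B_1$ by the definition of $B_1=\max\{1,2L,2L^2,2L^3,2L^4\}$, each term is bounded by $(\mathrm{const})\,r\,B_1(1+t+t^2+t^3+t^4)\int_0^t\int_0^L g$, and the first inequality in \eqref{H4} converts this into $(\mathrm{const})\,rA$. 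Tracking the explicit constants shows the five contributions sum to well under $4rA$, giving $\|Gu\|\le 4rA$ with ample room to spare.

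For $Fu$ the same differentiation scheme applies, but now the integrand carries the factor $F_3u(t_1,x_1)$, which I would bound by the already-established estimate \eqref{3.2.3.2} of Lemma \ref{lemma3.1.3.1}, namely $|F_3u(t_1,x_1)|\le 4L^2r(1+t_1)+2Lt_1\sum_{j=1}^l r^{p_j}\int_0^L\int_0^{t_1}c_j(t_2,x_2)\,dt_2\,dx_2$. Splitting along these two summands, the first piece produces terms of the shape $(\mathrm{const})\,r\,t^aL^b\int_0^t\int_0^L g$, handled exactly as for $Gu$ via the first line of \eqref{H4}; the second piece produces terms of the shape $(\mathrm{const})\sum_{j=1}^l r^{p_j}\,t^aL^b\int_0^t\int_0^L g(t_1,x_1)\int_0^L\int_0^{t_1}c_j(t_2,x_2)\,dt_2\,dx_2\,dx_1\,dt_1$, which after absorbing $t^a\le 1+t+t^2+t^3+t^4$ and $L^b\le B_1$ is controlled precisely by the second line of \eqref{H4}. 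Summing the five terms and the two pieces yields $\|Fu\|\le 4\bigl(r+\sum_{j=1}^l r^{p_j}\bigr)A$.

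The argument is conceptually routine; the only real work, and the place where care is needed, is the bookkeeping. One must verify that after all the derivatives are taken the accumulated powers of $t$ never exceed $t^4$ and the accumulated powers of $L$ never exceed $L^4$, so that the former are absorbed by $1+t+t^2+t^3+t^4$ and the latter by $B_1$, and that the leftover numerical constant in each of the ten estimates is at most the target coefficient. This is exactly what the definition of $B_1$ and the structure of \eqref{H4} are engineered to guarantee, so no genuinely new estimate beyond \eqref{3.2.3.2} is required.
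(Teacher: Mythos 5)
Your proposal follows essentially the same route as the paper's own proof: differentiate $Gu$ and $Fu$ up to second order in $t$ and in $x$ under the integral sign, bound each of the five components of the norm using $\|u\|_\infty\le r$ and the estimate \eqref{3.2.3.2} of Lemma \ref{lemma3.1.3.1}, absorb powers of $L$ into $B_1$ and powers of $t$ into $1+t+t^2+t^3+t^4$, and invoke the first two inequalities of \eqref{H4}. Structurally there is nothing new or different here.

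However, the step you dismiss as ``bookkeeping'' is precisely where both your write-up and the paper's proof are incomplete, and your stated success criterion (that the leftover constant in each of the ten estimates is at most the target coefficient) in fact fails for $F$. The paper bounds each of the \emph{five} components of $\|Gu\|$ by $rA$ and each of the five components of $\|Fu\|$ by $\bigl(r+\sum_{j=1}^l r^{p_j}\bigr)A$, and then asserts the factor $4$; literally summed, these give $5$, not $4$. For $G$ your finer claim is correct and easy: keeping the factor $\int_0^{t_1}(t_1-t_2)\,dt_2=t_1^2/2$ and using $L^2\le B_1/2$, $L\le B_1/2$, $1\le B_1$, the five coefficients are $\tfrac1{16}+\tfrac18+\tfrac18+\tfrac18+\tfrac14=\tfrac{11}{16}$, well under $4$. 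For $F$ it is not: with \eqref{3.2.3.2} and the crude kernel bounds you describe, the $r$-parts of the five components carry coefficients $\tfrac12,1,1,1,1$ (for instance $\bigl|\partial_t^2Fu\bigr|$ leads to $2L^4r(1+t)\int_0^t\int_0^Lg\le B_1r(1+t)\int_0^t\int_0^Lg\le rA$, and $2L^4\le B_1$ is an \emph{equality} whenever $L\ge1$), so the $r$-part alone sums to $\tfrac92rA$. Since $\sum_jr^{p_j}$ can be arbitrarily small compared with $r$ (take $l=1$ and $p_1$ large), the resulting bound $\tfrac92rA+\tfrac94A\sum_jr^{p_j}$ is not in general $\le 4\bigl(r+\sum_jr^{p_j}\bigr)A$. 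Closing this gap needs a refinement absent from both your sketch and the paper, e.g.\ replacing the crude bounds $t+t^2\le 1+t+\cdots+t^4$ and $t^2+t^3\le 1+t+\cdots+t^4$ by $\sup_{t\ge0}\frac{t+t^2}{1+t+t^2+t^3+t^4}<\tfrac12$ and its analogues, which brings the total down to roughly $\tfrac52rA+\tfrac35A\sum_jr^{p_j}$ and hence under the claimed constant. So: same method, equally rigorous in every displayed estimate, but the final summation for $\|Fu\|$ is a genuine gap --- one your proposal shares with the published proof itself.
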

\begin{proof}
Using \eqref{H3} and the first inequality of \eqref{H4}, we have the following estimates
\begin{eqnarray*}
|Gu(t, x)|&=& \bigg|-\frac{1}{4}\int_0^t \int_0^x (t-t_1)^2 (x-x_1)^2 g(t_1, x_1) \int_0^{t_1} (t_1-t_2) u(t_2, x_1) dt_2 dx_1 dt_1\bigg|\\
&\leq& \frac{1}{4}\int_0^t \int_0^x (t-t_1)^2 (x-x_1)^2 g(t_1, x_1) \int_0^{t_1} (t_1-t_2) |u(t_2, x_1)| dt_2 dx_1 dt_1\\
&\leq& \frac{r}{4} \int_0^t \int_0^x t_1^2(t-t_1)^2 (x-x_1)^2 g(t_1, x_1) dx_1 dt_1\\
&\leq& \frac{r}{4} t^4L^2\int_0^t \int_0^L g(t_1, x_1) dx_1 dt_1\\
&\leq& r B_1 t^4 \int_0^t \int_0^L g(t_1, x_1) dx_1 dt_1\\
&\leq& rA,
\end{eqnarray*}
and
\begin{eqnarray*}
\left|\frac{\partial}{\partial t}Gu(t, x)\right|&=& \bigg|-\frac{1}{2}\int_0^t \int_0^x (t-t_1) (x-x_1)^2 g(t_1, x_1) \int_0^{t_1} (t_1-t_2) u(t_2, x_1) dt_2 dx_1 dt_1\bigg|
\\
&\leq& \frac{1}{2}\int_0^t \int_0^x (t-t_1) (x-x_1)^2 g(t_1, x_1) \int_0^{t_1} (t_1-t_2) |u(t_2, x_1)| dt_2 dx_1 dt_1\\
&\leq& \frac{r}{2}\int_0^t \int_0^x t_1^2 (t-t_1) (x-x_1)^2 g(t_1, x_1) dx_1 dt_1\\
&\leq& \frac{r}{2}t^3 L^2\int_0^t \int_0^L g(t_1, x_1) dx_1 dt_1\\
&\leq& rB_1 t^3 \int_0^t \int_0^L g(t_1, x_1) dx_1 dt_1\\
&\leq& rA,
\end{eqnarray*}
and
\begin{eqnarray*}
\left|\frac{\partial^2}{\partial t^2}Gu(t, x)\right|&=&\bigg|- \frac{1}{2}\int_0^t \int_0^x  (x-x_1)^2 g(t_1, x_1) \int_0^{t_1} (t_1-t_2) u(t_2, x_1) dt_2 dx_1 dt_1\bigg|
\\
&\leq& \frac{1}{2} \int_0^t \int_0^x (x-x_1)^2 g(t_1, x_1) \int_0^{t_1} (t_1-t_2) |u(t_2, x_1)| dt_2 dx_1 dt_1\\
&\leq& \frac{r}{2} \int_0^t \int_0^x t_1^2 (x-x_1)^2 g(t_1, x_1) dx_1 dt_1
\\
&\leq& \frac{r}{2} t^2 L^2 \int_0^t \int_0^L g(t_1, x_1) dx_1 dt_1\\
&\leq& rB_1 t^2 \int_0^t \int_0^L g(t_1, x_1) dx_1 dt_1\\
&\leq& rA,
\end{eqnarray*}
and
\begin{eqnarray*}
\left|\frac{\partial}{\partial x}Gu(t, x)\right|&=& \bigg|-\frac{1}{2}\int_0^t \int_0^x (t-t_1)^2 (x-x_1) g(t_1, x_1) \int_0^{t_1} (t_1-t_2) u(t_2, x_1) dt_2 dx_1 dt_1\bigg|
\\
&\leq& \frac{1}{2}\int_0^t \int_0^x (t-t_1)^2 (x-x_1) g(t_1, x_1) \int_0^{t_1} (t_1-t_2) |u(t_2, x_1)| dt_2 dx_1 dt_1\\
&\leq& \frac{r}{2}\int_0^t \int_0^x t_1^2 (t-t_1)^2 (x-x_1) g(t_1, x_1) dx_1 dt_1\\
&\leq& \frac{r}{2}t^4 L\int_0^t \int_0^L g(t_1, x_1) dx_1 dt_1\\
&\leq& rB_1 t^4 \int_0^t \int_0^L g(t_1, x_1) dx_1 dt_1\\
&\leq& rA,
\end{eqnarray*}
and
\begin{eqnarray*}
\left|\frac{\partial^2}{\partial x^2}Gu(t, x)\right|&=&\bigg|- \frac{1}{2}\int_0^t \int_0^x (t-t_1)^2  g(t_1, x_1) \int_0^{t_1} (t_1-t_2) u(t_2, x_1) dt_2 dx_1 dt_1\bigg|\\
&\leq& \frac{1}{2} \int_0^t \int_0^x (t-t_1)^2 g(t_1, x_1) \int_0^{t_1} (t_1-t_2) |u(t_2, x_1)| dt_2 dx_1 dt_1\\
&\leq& \frac{r}{2} \int_0^t \int_0^x t_1^2 (t-t_1)^2 g(t_1, x_1) dx_1 dt_1\\
&\leq& \frac{r}{2} t^4  \int_0^t \int_0^L g(t_1, x_1) dx_1 dt_1\\
&\leq& rB_1 t^4 \int_0^t \int_0^L g(t_1, x_1) dx_1 dt_1\\
&\leq& rA,\quad (t, x)\in [0, \infty)\times [0, L].
\end{eqnarray*}
Thus,
\begin{equation*}
\Vert Gu\Vert \leq 4rA.
\end{equation*}
Next, using \eqref{H3} and the first and second inequalities of \eqref{H4}, we get
\begin{eqnarray*}
\lefteqn{|Fu(t, x)| = \bigg| \frac{1}{4} \int_0^t \int_0^x (t-t_1)^2 (x-x_1)^2 g(t_1, x_1) F_3 u(t_1, x_1)dx_1 dt_1\bigg|}\\
&\leq& \frac{1}{4}\int_0^t \int_0^x (t-t_1)^2 (x-x_1)^2 g(t_1, x_1) |F_3u(t_1, x_1)| dx_1 dt_1
\\
&\leq& L^2 r \int_0^t \int_0^x (1+t_1)(t-t_1)^2 (x-x_1)^2 g(t_1, x_1) dx_1 dt_1\\
&&+\frac{L}{2}\sum_{j=1}^l r^{p_j} \int_0^t \int_0^x t_1(t-t_1)^2 (x-x_1)^2 g(t_1, x_1)\int_0^L \int_0^{t_1} c_j(t_2, x_2) dt_2 dx_2 dx_1 dt_1\\
&\leq& L^4 r (t^2+t^3)\int_0^t \int_0^L g(t_1, x_1) dx_1 dt_1\\
&&+ \frac{L^3}{2} \sum_{j=1}^l r^{p_j}t^3 \int_0^t \int_0^L g(t_1, x_1) \int_0^L \int_0^{t_1} c_j(t_2, x_2) dt_2 dx_2 dx_1 dt_1\\
&\leq& B_1 r (t^2+t^3)\int_0^t \int_0^L g(t_1, x_1) dx_1 dt_1\\
&&+B_1 \sum_{j=1}^l r^{p_j} t^3 \int_0^t \int_0^L g(t_1, x_1) '\int_0^L \int_0^{t_1}c_j(t_2, x_2) dt_2 dx_2 dx_1 dt_1\\
&\leq& \left(r+\sum_{j=1}^l r^{p_j}\right)A,\quad (t, x)\in [0, \infty)\times [0, L],
\end{eqnarray*}
and
\begin{eqnarray*}
\lefteqn{\bigg|\frac{\partial}{\partial t}Fu(t, x)\bigg| = \bigg| \frac{1}{2} \int_0^t \int_0^x (t-t_1) (x-x_1)^2 g(t_1, x_1) F_3 u(t_1, x_1)dx_1 dt_1\bigg|}\\
&\leq& \frac{1}{2}\int_0^t \int_0^x (t-t_1) (x-x_1)^2 g(t_1, x_1) |F_3u(t_1, x_1)| dx_1 dt_1
\\
&\leq& 2L^2 r \int_0^t \int_0^x (1+t_1)(t-t_1) (x-x_1)^2 g(t_1, x_1) dx_1 dt_1\\
&&+{L}\sum_{j=1}^l r^{p_j} \int_0^t \int_0^x t_1(t-t_1) (x-x_1)^2 g(t_1, x_1)\int_0^L \int_0^{t_1} c_j(t_2, x_2) dt_2 dx_2 dx_1 dt_1
\\
&\leq& 2L^4 r (t+t^2)\int_0^t \int_0^L g(t_1, x_1) dx_1 dt_1\\
&&+ \frac{L^3}{2} \sum_{j=1}^l r^{p_j}t^2 \int_0^t \int_0^L g(t_1, x_1) \int_0^L \int_0^{t_1} c_j(t_2, x_2) dt_2 dx_2 dx_1 dt_1
\\
&\leq& B_1 r (t+t^2)\int_0^t \int_0^L g(t_1, x_1) dx_1 dt_1
\end{eqnarray*}
\begin{eqnarray*}
&&+B_1 \sum_{j=1}^l r^{p_j} t^2 \int_0^t \int_0^L g(t_1, x_1) '\int_0^L \int_0^{t_1}c_j(t_2, x_2) dt_2 dx_2 dx_1 dt_1\\
&\leq& \left(r+\sum_{j=1}^l r^{p_j}\right)A,\quad (t, x)\in [0, \infty)\times [0, L],
\end{eqnarray*}
and
\begin{eqnarray*}
\lefteqn{\bigg|\frac{\partial^2}{\partial t^2}Fu(t, x)\bigg| = \bigg| \frac{1}{2} \int_0^t \int_0^x  (x-x_1)^2 g(t_1, x_1) F_3 u(t_1, x_1)dx_1 dt_1\bigg|}\\
&\leq& \frac{1}{2}\int_0^t \int_0^x  (x-x_1)^2 g(t_1, x_1) |F_3u(t_1, x_1)| dx_1 dt_1
\\
&\leq& 2L^2 r \int_0^t \int_0^x (1+t_1) (x-x_1)^2 g(t_1, x_1) dx_1 dt_1\\
&&+{L}\sum_{j=1}^l r^{p_j} \int_0^t \int_0^x t_1 (x-x_1)^2 g(t_1, x_1)\int_0^L \int_0^{t_1} c_j(t_2, x_2) dt_2 dx_2 dx_1 dt_1
\\
&\leq& 2L^4 r (1+t)\int_0^t \int_0^L g(t_1, x_1) dx_1 dt_1\\
&&+ \frac{L^3}{2} \sum_{j=1}^l r^{p_j}t \int_0^t \int_0^L g(t_1, x_1) \int_0^L \int_0^{t_1} c_j(t_2, x_2) dt_2 dx_2 dx_1 dt_1
\\
&\leq& B_1 r (1+t)\int_0^t \int_0^L g(t_1, x_1) dx_1 dt_1\\
&&+B_1 \sum_{j=1}^l r^{p_j} t \int_0^t \int_0^L g(t_1, x_1) '\int_0^L \int_0^{t_1}c_j(t_2, x_2) dt_2 dx_2 dx_1 dt_1\\
&\leq& \left(r+\sum_{j=1}^l r^{p_j}\right)A,\quad (t, x)\in [0, \infty)\times [0, L],
\end{eqnarray*}
and
\begin{eqnarray*}
\lefteqn{\bigg|\frac{\partial}{\partial x}Fu(t, x)\bigg| = \bigg| \frac{1}{2} \int_0^t \int_0^x (t-t_1)^2 (x-x_1) g(t_1, x_1) F_3 u(t_1, x_1)dx_1 dt_1\bigg|}
\\
&\leq& \frac{1}{2}\int_0^t \int_0^x (t-t_1)^2 (x-x_1) g(t_1, x_1) |F_3u(t_1, x_1)| dx_1 dt_1
\\
&\leq& 2L^2 r \int_0^t \int_0^x (1+t_1)(t-t_1)^2 (x-x_1) g(t_1, x_1) dx_1 dt_1\\
&&+{L}\sum_{j=1}^l r^{p_j} \int_0^t \int_0^x t_1(t-t_1)^2 (x-x_1) g(t_1, x_1)\int_0^L \int_0^{t_1} c_j(t_2, x_2) dt_2 dx_2 dx_1 dt_1
\end{eqnarray*}
\begin{eqnarray*}
&\leq& 2 L^3 r (t^2+t^3)\int_0^t \int_0^L g(t_1, x_1) dx_1 dt_1\\
&&+ {L^2} \sum_{j=1}^l r^{p_j}t^3 \int_0^t \int_0^L g(t_1, x_1) \int_0^L \int_0^{t_1} c_j(t_2, x_2) dt_2 dx_2 dx_1 dt_1\\
&\leq& B_1 r (t^2+t^3)\int_0^t \int_0^L g(t_1, x_1) dx_1 dt_1\\
&&+B_1 \sum_{j=1}^l r^{p_j} t^3 \int_0^t \int_0^L g(t_1, x_1) '\int_0^L \int_0^{t_1}c_j(t_2, x_2) dt_2 dx_2 dx_1 dt_1\\
&\leq& \left(r+\sum_{j=1}^l r^{p_j}\right)A,\quad (t, x)\in [0, \infty)\times [0, L],
\end{eqnarray*}
and
\begin{eqnarray*}
\lefteqn{\bigg|\frac{\partial^2}{\partial x^2}Fu(t, x)\bigg| = \bigg| \frac{1}{2} \int_0^t \int_0^x (t-t_1)^2  g(t_1, x_1) F_3 u(t_1, x_1)dx_1 dt_1\bigg|}
\\
&\leq& \frac{1}{2}\int_0^t \int_0^x (t-t_1)^2  g(t_1, x_1) |F_3u(t_1, x_1)| dx_1 dt_1
\\
&\leq& 2L^2 r \int_0^t \int_0^x (1+t_1)(t-t_1)^2  g(t_1, x_1) dx_1 dt_1\\
&&+{L}\sum_{j=1}^l r^{p_j} \int_0^t \int_0^x t_1(t-t_1)^2  g(t_1, x_1)\int_0^L \int_0^{t_1} c_j(t_2, x_2) dt_2 dx_2 dx_1 dt_1
\\
&\leq& 2 L^2 r (t^2+t^3)\int_0^t \int_0^L g(t_1, x_1) dx_1 dt_1\\
&&+ {L} \sum_{j=1}^l r^{p_j}t^3 \int_0^t \int_0^L g(t_1, x_1) \int_0^L \int_0^{t_1} c_j(t_2, x_2) dt_2 dx_2 dx_1 dt_1
\end{eqnarray*}
\begin{eqnarray*}
&\leq& B_1 r (t^2+t^3)\int_0^t \int_0^L g(t_1, x_1) dx_1 dt_1\\
&&+B_1 \sum_{j=1}^l r^{p_j} t^3 \int_0^t \int_0^L g(t_1, x_1) '\int_0^L \int_0^{t_1}c_j(t_2, x_2) dt_2 dx_2 dx_1 dt_1\\
&\leq& \left(r+\sum_{j=1}^l r^{p_j}\right)A,\quad (t, x)\in [0, \infty)\times [0, L].
\end{eqnarray*}
Therefore
\begin{equation*}
\Vert Fu\Vert \leq 4\left(r+ \sum_{j=1}^l r^{p_j}\right)A.
\end{equation*}
This completes the proof.
\end{proof}
\section{Proof of the Main Result}
For $u\in E$, define the mappings
\begin{eqnarray*}
T u(t, x)&=& (1-\epsilon) u(t, x) +Gu(t, x),\\
S u(t, x)&=& \epsilon u(t, x) +F u(t, x),\quad (t, x)\in [0, \infty)\times [0, L].
\end{eqnarray*}
Note that if $u\in E$ is a fixed point of the mapping $T+S$, then
\begin{eqnarray*}
u(t, x)&=& T u(t, x)+Su(t, x)\\
&=& (1-\epsilon)u(t, x)+Gu(t, x)+\epsilon u(t, x)+Fu(t, x)\\
&=& u(t, x)+Fu(t, x)+Gu(t, x),\quad (t, x)\in [0, \infty)\times [0, L],
\end{eqnarray*}
or
\begin{eqnarray*}
0=Gu(t, x)+Fu(t, x),\quad (t, x)\in [0, \infty)\times [0, L].
\end{eqnarray*}
Therefore any fixed point $u\in E$ of the mapping $T+S$ is a solution of the IBVP \eqref{1}.
Define
\begin{eqnarray*}
\widetilde{\mathcal{P}}&=& \{u\in E: u(t, x)\geq 0,\quad (t, x)\in [0, \infty)\times [0, L]\},
\end{eqnarray*}
Let $\mathcal{P}$ be the set of all equi-continuous families in $\mathcal{P}$( an example for an equi-continuous family in $\mathcal{P}$ is the family $\{(3+\sin(t+n))(3+\cos(x+n)),\quad t\in [0, \infty),\quad x\in [0, L]\}_{n\in \mathbb{N}}$). Let also,
\begin{eqnarray*}
\Omega&=& \{u\in {\mathcal{P}}:\Vert u\Vert \leq R\},\\
U&=& \bigg\{ u\in \mathcal{P}:  u(t, x)< u_0(x), \quad (t, x)\in (0, \infty)\times [0, L],\\
&& u(t, x)< mu_0(x),\quad (t, x)\in [1, 2]\times [0, L],\quad \Vert u\Vert <r\bigg\}.
\end{eqnarray*}
Note that for $u\in \overline{U}$ , we have $F_1u\geq 0$, $F_2u\geq 0$, $Fu\geq 0$. Hence, for $u\in \overline{U}$, we have
\begin{eqnarray*}
F_1u(t, x)&\geq& \int_0^x x_1 (-u(t, x_1)+u_0(x_1))dx_1,\quad (t, x)\in [0, \infty)\times [0, L],
\end{eqnarray*}
and
\begin{equation}
\label{**}
Fu(t, x)\geq \frac{1}{4}\int_0^t \int_0^x (t-t_1)^2 (x-x_1)^2 g(t_1, x_1)\int_0^{x_1} x_2(-u(t_1, x_2)+u_0(x_2))dx_2 dx_1 dt_1,
\end{equation}
$(t, x)\in [0, \infty)\times [0, L]$.
\begin{enumerate}
\item For $u\in \Omega$, we have
\begin{equation*}
(I-T) u(t, x)= \epsilon u(t, x)-G u(t, x),\quad (t, x)\in [0, \infty)\times [0, L].
\end{equation*}
Then, for $u\in \Omega$, using Lemma \ref{lemma2}, we find
\begin{eqnarray*}
\Vert (I-T)u\Vert&\leq& \epsilon \Vert u\Vert +\Vert Gu\Vert\\
&\leq& (\epsilon+4A) \Vert u\Vert,\\
\Vert (I-T) u\Vert&\geq& \epsilon \Vert u\Vert -\Vert Gu\Vert \\
&\geq& (\epsilon-4A)\Vert u\Vert.
\end{eqnarray*}
Thus, $I-T: \Omega\to E$ is Lipschitz invertible with a constant $\gamma\in \left[\frac{1}{\epsilon+4A}, \frac{1}{\epsilon-4A}\right]$.
\item Let $u\in \overline{U}$. By Lemma \ref{lemma2}, we have
\begin{eqnarray*}
\Vert Su\Vert &\leq& \epsilon \Vert u\Vert +\Vert Fu\Vert\\
&\leq& \epsilon r+4\left(r+ \sum_{j=1}^l r^{p_j}\right)A.
\end{eqnarray*}
Therefore $S: \overline{U}\to E$ is uniformly bounded. Since $S: \overline{U}\to \mathbb{R}$  is continuous, we have that $S(\overline{U})$ is equi-continuous and $S: \overline{U}\to E$ is  relatively compact. Therefore $S: \overline{U}\to E$ is a $0$-set contraction.
\item Let $u\in \overline{U}$. For $z\in \Omega$, define the mapping
\begin{equation*}
L z(t, x)= T z(t, s) +Su(t, s),\quad (t, x)\in [0, \infty)\times [0, L].
\end{equation*}
For $z\in \Omega$, we get
\begin{eqnarray*}
\Vert L z\Vert&=& \Vert Tz+Su\Vert\\
&\leq& \Vert Tz\Vert +\Vert Su\Vert \\
&\leq& (1-\epsilon +4A)R+\epsilon r +4\left(r+ \sum_{j=1}^l r^{p_j}\right)A\\
&\leq& (1-\epsilon+4A)R+(\epsilon-4A)R\\
&=& R,
\end{eqnarray*}
i.e., $L:\Omega\to \Omega$. Next, for $z_1, z_2\in \Omega$, we have
\begin{eqnarray*}
\Vert Lz_1-L z_2\Vert &=& \Vert Tz_1-Tz_2\Vert\\
&=& \Vert T(z_1-z_2)\Vert\\
&\leq& (1-\epsilon+4A)\Vert z_1-z_2\Vert.
\end{eqnarray*}
Therefore $L:\Omega\to \Omega$ is a contraction mapping. Hence, there exists a unique $z\in \Omega$ such that
\begin{equation*}
z=Lz
\end{equation*}
or
\begin{equation*}
(I-T)z=Su.
\end{equation*}
Consequently $S(\overline{U})\subset (I-T)(\Omega)$.
\item Assume that there are $u\in \partial U$ and $\lambda\geq 1$ such that
\begin{equation*}
Su=(I-T)(\lambda u),\quad \lambda u\in  \Omega.
\end{equation*}
We have
\begin{equation*}
\epsilon u+Fu=\epsilon \lambda u-G(\lambda u)
\end{equation*}
or
\begin{equation*}
\epsilon(\lambda-1) u=Fu+G(\lambda u).
\end{equation*}
Since $\lambda u\in \Omega$, we have that $\Vert \lambda u\Vert \leq R$. Hence and Lemma \ref{lemma2}, we obtain $\Vert G(\lambda u)\Vert \leq 4RA$. Then
\begin{eqnarray*}
\epsilon (\lambda-1) r&=& \epsilon(\lambda-1)\Vert u\Vert \\
&=& \Vert Fu- G(\lambda u)\Vert \\
&\leq& \Vert Fu\Vert +\Vert G(\lambda u)\Vert \\
&\leq& 4\left(r+R+\sum_{j=1}^l r^{p_j}\right)A.
\end{eqnarray*}
Hence, for $\lambda u\in \Omega$  and $u\in \partial U$, using \eqref{**}, we get
\begin{eqnarray*}
\lefteqn{4\left( r+R+\sum_{j=1}^l r^{p_j}\right)A \geq \epsilon(\lambda-1) \Vert u\Vert}\\
&=& \Vert Fu+G(\lambda u)\Vert\\
&\geq& \Vert Fu\Vert -\Vert G(\lambda u)\Vert\\
&\geq& \sup_{(t, x)\in [0, \infty)\times [0, L]}Fu(t, x)-\Vert G(\lambda u)\Vert\\
&\geq& Fu(2, L)-\Vert G(\lambda u)\Vert\\
&\geq& \frac{1}{4} \int_0^2 \int_0^L (2-t_1)^2(L-x_1)^2g(t_1, x_1)\int_0^{x_1}x_2\left(-u(t_1, x_2)+u_0(x_2)\right)dx_2 dx_1 dt_1-4AR\\
&\geq& \frac{1}{4} \int_1^2 \int_0^L (2-t_1)^2(L-x_1)^2g(t_1, x_1)\int_0^{x_1}x_2\left(-u(t_1, x_2)+u_0(x_2)\right)dx_2 dx_1 dt_1-4AR\\
&\geq& \frac{1-m}{4} \int_1^2 \int_{L\over 2}^L (2-t_1)^2(L-x_1)^2g(t_1, x_1)\int_0^{x_1}x_2 u_0(x_2) dx_2 dx_1 dt_1-4AR\\
&\geq& \frac{1-m}{4}\int_1^2 \int_{L\over 2}^L(2-t_1)^2(L-x_1)^2g(t_1, x_1)\int_0^{L\over 2}x_2 u_0(x_2) dx_2 dx_1 dt_1-4AR\\
&\geq& \frac{1-m}{4}\int_1^2 \int_{L\over 2}^L(2-t_1)^2 (L-x_1)^2 g(t_1, x_1) \int_{L\over 3}^{L\over 2}x_2 u_0(x_2)dx_2 dx_1 dt_1-4AR\\
&\geq& \frac{1-m}{4} \int_1^{3\over 2} \int_{\frac{L}{2}}^{\frac{2}{3}L} (2-t_1)^2(L-x_1)^2g(t_1, x_1)\int_{\frac{L}{3}}^{\frac{L}{2}}x_2 u_0(x_2) dx_2 dx_1 dt_1 -4AR\\
&\geq& \frac{A}{b_1}-4AR,
\end{eqnarray*}
whereupon
\begin{equation*}
4\left(r+2R+\sum_{j=1}^l r^{p_j}\right)A\geq \frac{A}{b_1}
\end{equation*}
or
\begin{equation*}
4\left(r+2R+\sum_{j=1}^l r^{p_j}\right)\geq \frac{1}{b_1}.
\end{equation*}
This is a contradiction.
\end{enumerate}
By (1)--(4) and Proposition \ref{prop2c}, we conclude that the mapping $T+S$ has a fixed point in $U$. This completes the proof.
\section{Example}
Consider the following IBVP
\begin{equation}
\label{4.1}
\begin{array}{lll}
u_{tt}-u_{xx}&=& |u|^p,\quad t\geq 0,\quad x\in [0, 1],\\
u(0, x)&=& \frac{1}{10} x(1-x)^2,\quad x\in [0, 1],\\
u_t(0, x)&=& \frac{1}{50}x(1-x)^2,\quad x\in [0, 1],\\
u(t, 0)=u_x(t, 1)&=& 0,\quad t\geq 0,
\end{array}
\end{equation}
where $p>1$.
 Here
\begin{equation*}
f(t, x, u)= |u|^p,\quad L=l=1,\quad B_1=2,\quad c_1(t, x)=1,\quad (t, x)\in [0, \infty)\times [0, 1],
\end{equation*}
and
\begin{equation*}
u_0(x)=\frac{1}{10} x(1-x)^2,\quad u_1(x)=\frac{1}{50} x(1-x)^2,\quad x\in [0, 1].
\end{equation*}
Now we will construct a function $g$ so that \eqref{H4} holds. Let
\begin{equation*}
h(t)= \log \frac{1+t^4 \sqrt{2}+t^8}{1-t^4\sqrt{2}+t^8},\quad l(t)= \arctan\frac{t^4 \sqrt{2}}{1-t^8},\quad t\geq 0.
\end{equation*}
We have
\begin{eqnarray*}
h^{\prime}(t)&=& \frac{1}{(1+t^4 \sqrt{2}+t^8)(1-t^4\sqrt{2}+t^8)}\bigg((4\sqrt{2}t^3+8t^7)(1-t^4 \sqrt{2}+t^8)\\
&&-(1+t^4\sqrt{2}+t^8)(-4\sqrt{2}t^3+8t^7)\bigg)\\
&=&  \frac{1}{(1+t^4 \sqrt{2}+t^8)(1-t^4\sqrt{2}+t^8)}\bigg( 4\sqrt{2}t^3-8t^7+4\sqrt{2}t^{11}+8t^7\\
&&-8\sqrt{2}t^{11}+8t^{15}+4\sqrt{2}t^3-8t^7+8t^7-8\sqrt{2}t^{11}+4\sqrt{2}t^{11}-8t^{15}\bigg)\\
&=&  -\frac{8\sqrt{2}t^3(t^8-1)}{(1+t^4 \sqrt{2}+t^8)(1-t^4\sqrt{2}+t^8)},\quad t\geq 0.
\end{eqnarray*}
Thus,
\begin{equation*}
\sup_{t\geq 0} h(t)=h(1)=\log \frac{2+\sqrt[4]{2}}{2-\sqrt[4]{2}},
\end{equation*}
$h$ is an increasing function on $[0, 1]$ and it is a decreasing function on $[1, \infty)$. Next,
\begin{eqnarray*}
l^{\prime}(t)&=&\frac{1}{1+\frac{2t^8}{(1-t^8)^2}}\frac{4\sqrt{2}t^3(1-t^8)+8t^7t^4 \sqrt{2}}{(1-t^8)^2}\\
&=& \frac{4\sqrt{2}t^3-4\sqrt{2}t^{11}+8\sqrt{2}t^{11}}{1+t^{16}}\\
&=& \frac{4\sqrt{2}t^3(1+t^8)}{1+t^{16}},\quad t\geq 0.
\end{eqnarray*}
Therefore $l$ is an increasing function on $[0, \infty)$.   Note that, by l'Hopital's rule, we have
\begin{eqnarray*}
\lim_{t\to\infty}t h(t)&=& 0\\
\lim_{t\to\infty} t^2 h(t)&=&0\\
\lim_{t\to\infty} t^3 h(t)&=& 0\\
\lim_{t\to\infty} t^4 h(t)&=&2\sqrt{2},
\end{eqnarray*}
and
\begin{eqnarray*}
\lim_{t\to\infty} tl(t)&=&0\\
\lim_{t\to\infty}t^2 l(t)&=&0\\
\lim_{t\to\infty} t^3l(t)&=&0\\
\lim_{t\to\infty} t^4l(t)&=&-\sqrt{2}.
\end{eqnarray*}
Consequently, there exists a constant $B>1$ such that
\begin{equation*}
(1+t+t^2+t^3+t^4)\left(\frac{1}{16\sqrt{2}}\log\frac{1+t^4\sqrt{2}+t^8}{1-t^4\sqrt{2}+t^8}+\frac{1}{8\sqrt{2}}\arctan \frac{t^4\sqrt{2}}{1-t^8}\right)\leq B.
\end{equation*}
Note that, by \cite{pol}(pp. 707, Integral 79), we have
\begin{equation*}
\int\frac{dz}{1+z^4}=\frac{1}{4\sqrt{2}}\log\frac{1+z\sqrt{2}+z^2}{1-z\sqrt{2}+z^2}+\frac{1}{2\sqrt{2}}\arctan\frac{z\sqrt{2}}{1-z^2}.
\end{equation*}
Take
\begin{equation*}
 \epsilon=\frac{1}{2},\quad b_1=\frac{B(15)^2(2^{16}+3^{16})(2^2+3^2)3^4}{2^5},\quad A= \frac{1}{20 b_1},
\end{equation*}
\begin{equation*}
m=\frac{1}{2},\quad r=\frac{4}{27},
\end{equation*}
\begin{equation*}
R= \frac{2}{\epsilon-4A}\left(\epsilon r+4\left(r+r^p\right)A\right).
\end{equation*}
 Then
\begin{equation*}
0\leq u_0(x)<r,\quad 0\leq u_1(x)<r,\quad x\in [0, 1],
\end{equation*}
and
\begin{equation*}
u_0(0)=u_{0x}(1)=u_1(0)=u_{1x}(1)=0,
\end{equation*}
i.e., \eqref{H2} holds.  Also, \eqref{H3} holds.
Let
\begin{equation*}
g(t, x)= \frac{A}{200B}\frac{t^3}{(1+t^{16})(1+t^2)},\quad (t, x)\in [0, \infty)\times [0, 1].
\end{equation*}
\begin{eqnarray*}
\lefteqn{B_1(1+t+t^2+t^3+t^4) \int_0^t \int_0^1 g(t_1, x_1) dx_1 dt_1}\\
&\leq& \frac{A}{100B} (1+t+t^2+t^3+t^4)\int_0^t \frac{t_1^3}{1+t_1^{16}}dt_1\\
&\leq& \frac{A}{100}\\
&\leq& A
\end{eqnarray*}
and
\begin{eqnarray*}
\lefteqn{B_1(1+t+t^2+t^3+t^4) \int_0^t \int_0^1 g(t_1, x_1)\int_0^1 \int_0^{t_1} c_j(t_1, x_1)dt_2 dx_2  dx_1 dt_1}\\
&\leq& 2(1+t+t^2+t^3+t^4) \int_0^t \int_0^1 t_1g(t_1, x_1) dx_1 dt_1\\
&\leq& \frac{A}{100B} (1+t+t^2+t^3+t^4)\int_0^t \frac{t_1^3}{1+t_1^{16}}dt_1\\
&\leq& \frac{A}{100}\\
&\leq& A,
\end{eqnarray*}
and
\begin{eqnarray*}
\lefteqn{\frac{1-m}{4} \int_1^{3\over 2} \int_{1\over 2}^{2\over 3} (2-t_1)^2(1-x_1)^2g(t_1, x_1) \int_{1\over 3}^{1\over 2} x_2 u_0(x_2) dx_2 dx_1 dt_1}\\
&=& \frac{A}{1600B} \int_1^{3\over 2} \int_{1\over 2}^{2\over 3} (2-t_1)^2(1-x_1)^2 \frac{t_1^3}{(1+t_1^{16})(1+t_1^2)}\int_{1\over 3}^{1\over 2} x_2^2(1-x_2)^2 dx_2 dx_1 dt_1\\
&\geq& \frac{A}{1600B} \left(\frac{1}{2}\right)^2\left(\frac{1}{3}\right)^2\frac{1}{\left(1+\left(\frac{3}{2}\right)^{16}\right)\left(1+ \left(\frac{3}{2}\right)^2\right)}\left(\frac{1}{3}\right)^2\left(\frac{1}{2}\right)^2\frac{1}{2}\left(\frac{1}{6}\right)^2\\
&=& \frac{A}{1600B}\cdot \frac{1}{2^{7}\cdot 3^6}\frac{2^{18}}{(2^{16}+3^{16})(2^2+3^2)}\\
&\geq& \frac{2^5A}{B(15)^2(2^{16}+3^{16})(2^2+3^2)3^4}= \frac{A}{b_1}.
\end{eqnarray*}
Consequently  \eqref{H4} holds and the IBVP \eqref{4.1}  has at least one non-negative solution $u\in \mathcal{C}^2([0, \infty)\times [0, 1])$.

\end{document}